\newtheorem{theorem}{Theorem}
\newtheorem{lemma}[theorem]{Lemma}
\theoremstyle{definition}
\newtheorem{definition}[theorem]{Definition}
\theoremstyle{remark}
\newtheorem{remark}{Remark}
\DeclareMathOperator{\GL}{GL}
\DeclareMathOperator{\Adj}{Ad}
\DeclareMathOperator{\adj}{ad}
\title[An equivariant description of certain holomorphic symplectic varieties]{An equivariant description of certain \\ holomorphic symplectic varieties}
\author{Peter Crooks}
\address{Institute of Differential Geometry, Gottfried Wilhelm Leibniz Universit\"{a}t Hannover, Welfengarten 1, 30167 Hannover, Germany}
\email{~~~peter.crooks@math.uni-hannover.de}
\begin{document}
	
	\begin{abstract}
This short note considers varieties of the form $G\times S_{\text{reg}}$, where $G$ is a complex semisimple group and $S_{\text{reg}}$ is a regular Slodowy slice in the Lie algebra of $G$. Such varieties arise naturally in hyperk\"ahler geometry, theoretical physics, and in the theory of abstract integrable systems developed by Fernandes, Laurent-Gengoux, and Vanhaecke. In particular, previous work of the author and Rayan uses a Hamiltonian $G$-action to endow $G\times S_{\text{reg}}$ with a canonical abstract integrable system. One might therefore wish to understand, in some sense, all examples of abstract integrable systems arising from Hamiltonian $G$-actions. Accordingly, we consider a holomorphic symplectic variety $X$ carrying an abstract integrable system induced by a Hamiltonian $G$-action. Under certain hypotheses, we show that there must exist a $G$-equivariant variety isomorphism $X\cong G\times S_{\text{reg}}$.       
	\end{abstract}
	\subjclass[2010]{14L30 (primary); 51H30, 53D20 (secondary)}
	
	\maketitle
	\section{Introduction}	
\subsection{Some preliminaries}\label{Subsection: Lie-theoretic preliminaries}
We will work exclusively over $\mathbb{C}$, understanding it as implicitly present whenever a base field is needed. Now let $G$ be a connected, simply-connected semisimple linear algebraic group having rank equal to $\mathrm{rk}(G)$, Lie algebra denoted $\mathfrak{g}$, and adjoint representation denoted $\Adj:G\rightarrow\GL(\mathfrak{g})$. Note that $\Adj$ induces the adjoint action of $G$ on $\mathfrak{g}$, whose orbits are called the \textit{adjoint orbits} of $G$. We shall let $\mathcal{O}(x)\subseteq\mathfrak{g}$ denote the adjoint orbit containing $x\in\mathfrak{g}$, i.e.
$$\mathcal{O}(x):=\{\Adj_g(x):g\in G\}.$$

The Killing form is $\mathrm{Ad}$-invariant and nondegenerate, and therefore induces an isomorphism $\mathfrak{g}\cong\mathfrak{g}^*$ between the adjoint and coadjoint representations of $G$. We will often deal with moment maps for Hamiltonian $G$-actions, which by virtue of our isomorphism $\mathfrak{g}\cong\mathfrak{g}^*$ shall always be regarded as taking values in $\mathfrak{g}$. 

Let $\adj:\mathfrak{g}\rightarrow\mathfrak{gl}(\mathfrak{g})$ be the adjoint representation of $\mathfrak{g}$. An element $x\in\mathfrak{g}$ is called \textit{regular} when the dimension of $\mathrm{ker}(\adj_x)$ coincides with $\mathrm{rk}(G)$, and we shall let $\mathfrak{g}_{\mathrm{reg}}\subseteq\mathfrak{g}$ denote the open dense subvariety of all regular elements. This subvariety is invariant under the adjoint action, and as such is a union of certain adjoint orbits --- called the \textit{regular adjoint orbits}. Equivalently, an adjoint orbit is regular if and only if its dimension is $\dim(G)-\mathrm{rk}(G)$.

Recall that $(\xi,h,\eta)\in\mathfrak{g}^{\oplus 3}$ is called an $\mathfrak{sl}_2(\mathbb{C})$-\textit{triple} if the relations
$$[\xi,\eta]=h,\quad [h,\xi]=2\xi,\quad [h,\eta]=-2\eta$$ hold in $\mathfrak{g}$, and is called a \textit{regular} $\mathfrak{sl}_2(\mathbb{C})$-\textit{triple} when we also have $\xi,\eta\in\mathfrak{g}_{\text{reg}}$. Take a regular $\mathfrak{sl}_2(\mathbb{C})$-triple $(\xi,h,\eta)$, fixed for the duration of this paper, and consider its associated \textit{Slodowy slice}
$$S_{\text{reg}}:=\xi+\mathrm{ker}(\adj_\eta):=\{\xi+x:x\in\mathrm{ker}(\adj_\eta)\}\subseteq\mathfrak{g}.$$ This slice is a $\mathrm{rk}(G)$-dimensional affine-linear subspace of $\mathfrak{g}$ enjoying the following properties: $S_{\text{reg}}\subseteq\mathfrak{g}_{\text{reg}}$ and
each regular adjoint orbit meets $S_{\text{reg}}$ in a unique point (see \cite[Thm. 8]{KostantLie}). Taken together, these two properties imply that
\begin{equation}\label{Equation: Isomorphism Slodowy slice quotient} \varphi:S_{\text{reg}}\rightarrow\mathfrak{g}_{\text{reg}}/G,\quad x\mapsto\mathcal{O}(x)\end{equation}     	  
defines an isomorphism of algebraic varieties.

\subsection{The main motivating example}
The affine variety $G\times S_{\text{reg}}$ has received some attention in the research literature. Among other things, it is known to carry a distinguished hyperk\"ahler manifold structure (see \cite{Bielawski}), and it arises as an important object in Moore and Tachikawa's discussion of certain two-dimensional topological quantum field theories (see \cite{Moore}). At the same time, this variety and its properties will feature prominently in our paper. To elaborate on this, let us use the term \textit{holomorphic symplectic variety} for a smooth algebraic variety $X$ endowed with a holomorphic symplectic form $\omega$. A left action of $G$ on $X$ shall then be called \textit{Hamiltonian} if the action is algebraic, $\omega$ is $G$-invariant, and there exists a moment map, i.e. a $G$-equivariant smooth algebraic variety morphism $\mu:X\rightarrow\mathfrak{g}$ satisfying
$$d\big(\langle\mu,\theta\rangle\big)=\iota_{\theta_X}\omega$$
for all $\theta\in\mathfrak{g}$. Here, $G$-equivariance is with respect to the adjoint action of $G$ on $\mathfrak{g}$, $\langle\cdot,\cdot\rangle$ is the Killing form on $\mathfrak{g}$, and $\theta_X$ denotes the fundamental vector field on $X$ associated to $\theta\in\mathfrak{g}$. 

It turns out $G\times S_{\text{reg}}$ is canonically a holomorphic symplectic variety, a consequence of its hyperk\"ahler structure. Moreover, $G$ acts freely on $G\times S_{\text{reg}}$ via
\begin{equation}\label{Equation: G-action} g\cdot (h,x):=(hg^{-1},x),\quad g\in G,\text{ }(h,x)\in G\times S_{\text{reg}}.\end{equation} This action is Hamiltonian with moment map 
\begin{equation}\label{Equation: Formula for mu_reg} \mu_{\text{reg}}:G\times S_{\text{reg}}\rightarrow\mathfrak{g},\quad (g,x)\mapsto-\Adj_{g^{-1}}(x)
\end{equation}  
(see \cite[Prop. 5]{Crooks}), known to be a submersion (see \cite[Prop. 6]{Crooks}). The connected components of $\mu_{\text{reg}}$'s fibres are therefore the leaves of a holomorphic foliation $\mathcal{F}_{\text{reg}}$ of $G\times S_{\text{reg}}$, and it is easily seen that these leaves are $\mathrm{rk}(G)$-dimensional. The pair $(G\times S_{\text{reg}},\mathcal{F}_{\text{reg}})$ is actually an example of an \textit{abstract integrable system} of rank equal to $\mathrm{rk}(G)$ (see \cite[Thm. 13]{Crooks}), for which we have the following definition.

\begin{definition}\label{Definition: Abstract integrable system}
Let $X$ be a holomorphic symplectic variety and $\mathcal{F}$ a holomorphic foliation of $X$ with $r$-dimensional leaves. One calls $(X,\mathcal{F})$ an \textit{abstract integrable system of rank} $r$ if each $x\in X$ has an open neighbourhood $U$, together with leaf-wise constant holomorphic functions on $U$ whose Hamiltonian vector fields span $T\mathcal{F}\subseteq TX$ on $U$.
\end{definition}

A few brief comments are in order. Firstly, Definition \ref{Definition: Abstract integrable system} is just a holomorphic counterpart of \cite[Def. 2.6]{Fernandes}, in which Fernandes, Laurent-Gengoux, and Vanhaecke introduce the notion of an \textit{abstract noncommutative\footnote{Note that Definition \ref{Definition: Abstract integrable system} suppresses the term ``noncommutative'', which is in keeping with \cite[Def. 2]{Crooks}.} integrable system} in the smooth category. Very roughly speaking, this notion aims to describe certain integrable systems in purely foliation-theoretic terms. We refer the reader to \cite{Fernandes} for further details.     

Let us return to the main discussion. In particular, note that $\mathcal{F}_{\text{reg}}$ is a foliation whose leaves are the connected components of a moment map's fibres. It is therefore natural to seek conditions under which a moment map will, analogously to $\mu_{\text{reg}}$ in the case of $(G\times S_{\text{reg}},\mathcal{F}_{\text{reg}})$, induce an abstract integrable system. To this end, we have \cite[Thm. 14]{Crooks}:         

\begin{theorem}\label{Theorem: Examples theorem}
Let $X$ be a holomorphic symplectic variety equipped with a locally free Hamiltonian $G$-action admitting $\mu:X\rightarrow\mathfrak{g}$ as a moment map. Let $\mathcal{F}_{\mu}$ denote the holomorphic foliation of $X$ whose leaves are the connected components of $\mu$'s fibres.\footnote{Since the $G$-action is locally free, $\mu$ is a submersion (see \cite[Prop. III.2.3]{Audin}) and all fibres have the same dimension.} Then the pair $(X,\mathcal{F}_{\mu})$ is an abstract integrable system if and only if $\dim(X)=\dim(G)+\mathrm{rk}(G)$ and $\mu(X)\subseteq\mathfrak{g}_{\emph{reg}}$, in which case $\mathrm{rk}(G)$ is the rank of the system. 	
\end{theorem}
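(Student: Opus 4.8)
The plan is to reduce the biconditional to a single pointwise computation relating leaf-wise constant functions, the infinitesimal $G$-action, and centralizers in $\mathfrak{g}$. Three facts drive everything. First, since the action is locally free, $\mu$ is a submersion, so the leaves of $\mathcal{F}_\mu$ satisfy $T_p\mathcal{F}_\mu = \ker(d\mu_p)$; moreover the moment map condition $d\langle\mu,\theta\rangle = \iota_{\theta_X}\omega$ shows directly that $v\in\ker(d\mu_p)$ if and only if $\omega_p(\theta_X(p),v)=0$ for all $\theta$, i.e. that $\ker(d\mu_p)$ is the $\omega$-orthogonal complement of the orbit tangent space $T_p(G\cdot p)=\{\theta_X(p):\theta\in\mathfrak{g}\}$. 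Second, $G$-equivariance of $\mu$ differentiates to $d\mu_p(\theta_X(p)) = [\theta,\mu(p)]$, so $\theta_X(p)\in\ker(d\mu_p)$ precisely when $\theta\in\ker(\adj_{\mu(p)})$, the centralizer of $\mu(p)$. Third, near any point a leaf-wise constant function has the form $h=f\circ\mu$, and writing $\nabla f$ for the Killing-form gradient (defined by $\langle\nabla f(x),\cdot\rangle = df_x$) one computes from the moment map condition that $X_h(p)=(\nabla f(\mu(p)))_X(p)$. Combining these, every Hamiltonian vector field of a leaf-wise constant function that is tangent to $\mathcal{F}_\mu$ lies in the subspace $W_p := \{\theta_X(p):\theta\in\ker(\adj_{\mu(p)})\}\subseteq T_p\mathcal{F}_\mu$, and local freeness makes $\theta\mapsto\theta_X(p)$ injective, so $\dim W_p = \dim\ker(\adj_{\mu(p)})$.

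For the forward implication, suppose $(X,\mathcal{F}_\mu)$ is an abstract integrable system. Applying the definition at each $p$, the spanning functions produce Hamiltonian vector fields spanning $T_p\mathcal{F}_\mu$; since these all lie in $W_p\subseteq T_p\mathcal{F}_\mu$, I get $W_p = T_p\mathcal{F}_\mu$, whence $\dim\ker(\adj_{\mu(p)}) = \dim T_p\mathcal{F}_\mu = \dim(X)-\dim(G)$ for every $p$. As a holomorphic submersion $\mu$ is an open map, so $\mu(X)$ is open and therefore meets the open dense locus $\mathfrak{g}_{\mathrm{reg}}$; at a point $p_0$ with $\mu(p_0)\in\mathfrak{g}_{\mathrm{reg}}$ the centralizer has dimension $\mathrm{rk}(G)$, forcing $\dim(X)=\dim(G)+\mathrm{rk}(G)$. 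Feeding this back, $\dim\ker(\adj_{\mu(p)})=\mathrm{rk}(G)$ for all $p$, which is exactly the statement $\mu(X)\subseteq\mathfrak{g}_{\mathrm{reg}}$.

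For the reverse implication, assume $\dim(X)=\dim(G)+\mathrm{rk}(G)$ and $\mu(X)\subseteq\mathfrak{g}_{\mathrm{reg}}$, so that $\dim T_p\mathcal{F}_\mu = \mathrm{rk}(G)=\dim\ker(\adj_{\mu(p)})=\dim W_p$ and hence $W_p=T_p\mathcal{F}_\mu$ at every $p$. I would then exhibit explicit spanning functions: let $f_1,\dots,f_{\mathrm{rk}(G)}$ be homogeneous generators of the invariant ring $\mathbb{C}[\mathfrak{g}]^G$ furnished by Chevalley's restriction theorem, so each $f_i\circ\mu$ is a globally defined leaf-wise constant function. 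By Kostant's theorem \cite{KostantLie}, the gradients $\nabla f_1(x),\dots,\nabla f_{\mathrm{rk}(G)}(x)$ form a basis of $\ker(\adj_x)$ for every $x\in\mathfrak{g}_{\mathrm{reg}}$; applying the computation above together with the injectivity of $\theta\mapsto\theta_X(p)$, the vector fields $X_{f_1\circ\mu},\dots,X_{f_{\mathrm{rk}(G)}\circ\mu}$ form a basis of $W_p=T_p\mathcal{F}_\mu$ at every $p$. They therefore span $T\mathcal{F}_\mu$ on $U=X$, so $(X,\mathcal{F}_\mu)$ is an abstract integrable system whose rank equals the leaf dimension $\dim(X)-\dim(G)=\mathrm{rk}(G)$.

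The main obstacle is the Lie-theoretic input rather than the symplectic bookkeeping: everything hinges on the facts that $\dim\ker(\adj_x)\geq\mathrm{rk}(G)$ with equality exactly on $\mathfrak{g}_{\mathrm{reg}}$, and that the gradients of a generating set of $\mathbb{C}[\mathfrak{g}]^G$ furnish an honest basis of the centralizer at each regular element (Kostant). The conceptual crux is the identification of the span of the admissible Hamiltonian vector fields with $W_p$, after which both directions become dimension counts; the delicate point is verifying that $W_p$ is genuinely all of $T_p\mathcal{F}_\mu$ and not merely contained in it, which is what ties regularity of $\mu$ to the integrable-system property.
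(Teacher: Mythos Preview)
The paper does not actually prove this theorem: it is quoted verbatim as \cite[Thm.~14]{Crooks} and used as input, so there is no in-paper argument to compare against. That said, your proposal is a correct and complete proof, and it is essentially the natural one. The identification $X_{f\circ\mu}(p)=(\nabla f(\mu(p)))_X(p)$ is the right computation, and once you observe that every such vector lies in the orbit tangent space $T_p(G\cdot p)$ while $T_p\mathcal{F}_\mu=(T_p(G\cdot p))^{\omega}$, the equality $W_p=T_p(G\cdot p)\cap T_p\mathcal{F}_\mu$ drops out and both directions reduce to dimension counts exactly as you indicate. Your use of the openness of $\mu(X)$ to locate a regular value in the forward direction, and of Kostant's result that the gradients of generating invariants form a basis of the centralizer at regular points in the reverse direction, are precisely the Lie-theoretic inputs one expects here. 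One small point worth making explicit: the assertion that a leaf-wise constant function is locally of the form $f\circ\mu$ uses that $\mu$ is a submersion (so that locally the fibers are connected and $\mu$ admits local sections), which you are implicitly invoking; this is fine but could be stated.
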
	

\subsection{Description of the main result}
This paper is an attempt to (at least partially) understand the class of abstract integrable systems $(X,\mathcal{F}_{\mu})$ that arise by satisfying the hypotheses of Theorem \ref{Theorem: Examples theorem}. More precisely, let $X$ be a holomorphic symplectic variety endowed with a Hamiltonian action of $G$ and moment map $\mu:X\rightarrow\mathfrak{g}$. We would like to better understand those cases in which all of the following conditions are satisfied:
\begin{itemize}
	\item the $G$-action is locally free,
	\item $\dim(X)=\dim(G)+\mathrm{rk}(G)$, and
	\item $\mu(X)\subseteq\mathfrak{g}_{\text{reg}}$.
\end{itemize}
Our main result imposes some slightly more restrictive conditions, and then completely classifies $X$ up to a $G$-equivariant variety isomorphism. In more detail, our main result is as follows.

\begin{theorem}\label{Theorem: Main theorem}
Let $X$ be a holomorphic symplectic variety endowed with a Hamiltonian $G$-action and admitting $\mu:X\rightarrow\mathfrak{g}$ as a moment map. If
\begin{itemize}
\item[(i)] $X$ is affine,	
\item[(ii)] the $G$-action is free,
\item[(iii)] $\dim(X)=\dim(G)+\mathrm{rk}(G)$,
\item [(iv)] $\mu(X)=\mathfrak{g}_{\emph{reg}}$, and
\item[(v)] $\mu^{-1}(\mathcal{O})$ is an irreducible subvariety of $X$ for all adjoint orbits $\mathcal{O}\subseteq\mathfrak{g}$, 
\end{itemize}
then there exists a $G$-equivariant variety isomorphism $X\cong G\times S_{\emph{reg}}$.
\end{theorem}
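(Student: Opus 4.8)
The plan is to discard the symplectic form entirely and realise $X$ as a \emph{trivial} principal $G$-bundle over $S_{\text{reg}}$. Composing $\mu$ with negation, the adjoint quotient, and the inverse of the isomorphism $\varphi$ of \eqref{Equation: Isomorphism Slodowy slice quotient}, I first form the $G$-invariant morphism
\[
\pi : X \longrightarrow S_{\text{reg}}, \qquad p \longmapsto \varphi^{-1}\big(\mathcal{O}(-\mu(p))\big),
\]
whose value at $p$ is the unique point of $S_{\text{reg}}$ on the regular orbit of $-\mu(p)$; the sign makes $\pi$ agree with the second projection on the model $G\times S_{\text{reg}}$, where $\mu_{\text{reg}}(g,x)=-\Adj_{g^{-1}}(x)$. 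Here $-\mu(p)\in\mathfrak{g}_{\text{reg}}$ because $\mathrm{ker}(\adj_{-x})=\mathrm{ker}(\adj_x)$, so $\pi$ is well defined and $G$-invariant by equivariance of $\mu$. Granting that $\pi$ is a principal $G$-bundle admitting a section $\sigma:S_{\text{reg}}\to X$, the map $(g,s)\mapsto g^{-1}\cdot\sigma(s)$ is a $G$-equivariant isomorphism $G\times S_{\text{reg}}\xrightarrow{\sim}X$ for the action \eqref{Equation: G-action} (one checks equivariance directly, $g^{-1}\mapsto$ being compensated by the right-translation convention), so the whole problem reduces to producing such a $\sigma$.

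The structural heart is the claim that every nonempty fibre of $\pi$ is a single free $G$-orbit. Since the $G$-action is free, $\mu$ is a submersion, so each fibre $\mu^{-1}(y)$ has dimension $\dim(X)-\dim(\mathfrak{g})=\mathrm{rk}(G)$, and for a regular orbit $\mathcal{O}\subseteq\mathfrak{g}_{\text{reg}}$ the preimage $\mu^{-1}(\mathcal{O})$ is smooth of dimension $\mathrm{rk}(G)+\dim(\mathcal{O})=\dim(G)$. Every orbit inside $\mu^{-1}(\mathcal{O})$ has dimension $\dim(G)$ by freeness, hence is open in its closure and, being of full dimension in the irreducible variety $\mu^{-1}(\mathcal{O})$ (hypothesis (v)), is open and dense; two distinct orbits would be disjoint dense opens in an irreducible space, which is impossible, so $\mu^{-1}(\mathcal{O})$ is exactly one $G$-orbit. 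Because all regular orbits share the dimension $\dim(G)-\mathrm{rk}(G)$, none lies in the closure of another within $\mathfrak{g}_{\text{reg}}$, so each $\mathcal{O}$ is closed in $\mathfrak{g}_{\text{reg}}$ and each $\mu^{-1}(\mathcal{O})$ is closed in $X$. As $\pi^{-1}(s)=\mu^{-1}(\mathcal{O})$ for the regular orbit $\mathcal{O}$ determined by $s$, the fibres of $\pi$ are precisely the closed free $G$-orbits, and $\mu(X)=\mathfrak{g}_{\text{reg}}$ makes $\pi$ surjective.

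Next I would promote $\pi$ to a principal bundle and identify its base. Since $G$ is reductive and $X$ is affine with all orbits closed and stabilisers trivial, Luna's \'etale slice theorem applies with trivial slice representation at every point: \'etale-locally over the categorical quotient $X/\!\!/G=\Spec(\mathbb{C}[X]^G)$ one has $X\cong G\times U$, so $X\to X/\!\!/G$ is a principal $G$-bundle and a geometric quotient. The invariant surjection $\pi$ separates orbits, hence descends to a bijective morphism $X/\!\!/G\to S_{\text{reg}}$, which the \'etale-local product structure shows to be an isomorphism. Thus $\pi:X\to S_{\text{reg}}$ is a principal $G$-bundle over the affine space $S_{\text{reg}}\cong\mathbb{A}^{\mathrm{rk}(G)}$.

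The remaining, and genuinely hardest, step is to trivialise this bundle, i.e.\ to produce $\sigma$. This is precisely the assertion that every principal $G$-bundle over $\mathbb{A}^{\mathrm{rk}(G)}$ is trivial, and I expect it to be the crux: for a general semisimple $G$ the triviality of $G$-torsors can fail over smooth affine bases, and it is not Zariski-local unless $G$ is special, yet over the affine space it does hold, and I would invoke the corresponding triviality theorem for $G$-torsors over affine space (working over $\mathbb{C}$, where $G$ is split and hence isotropic). A tempting self-contained alternative is to build $\sigma$ directly from the moment map by realising $\mu^{-1}(-S_{\text{reg}})$ as the family of stabiliser torsors over $S_{\text{reg}}$ and selecting an ``identity'' point in each; the obstruction there is exactly that this family need not admit a canonical trivialising section, which loops back to the same bundle-triviality issue. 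Once $\sigma$ is in hand, the equivariant isomorphism $(g,s)\mapsto g^{-1}\cdot\sigma(s)$ completes the proof.
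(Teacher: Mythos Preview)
Your proposal is correct and follows essentially the same route as the paper: form the $G$-invariant map $X\to S_{\text{reg}}$, show via the dimension count and hypothesis (v) that each fibre is a single free $G$-orbit, invoke Luna to make it a principal $G$-bundle, and then appeal to triviality of $G$-torsors over affine space. The only noteworthy difference is that the paper explicitly verifies $X$ is irreducible (needed for the quotient identification) by proving the map to $S_{\text{reg}}$ is a submersion with connected fibres, a point you leave implicit but which follows from your own observation that $\pi$ is surjective with fibres equal to orbits of the connected group $G$.
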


We shall devote Section \ref{Section: Proof of the main result} to the proof of this theorem. In the interim, let us make a few remarks about the hypotheses appearing in Theorem \ref{Theorem: Main theorem}. 

\begin{remark}\label{Remark: New remark}
As one might expect, $X=G\times S_{\text{reg}}$, the Hamiltonian action \eqref{Equation: G-action}, and the moment map $\mu=\mu_{\text{reg}}$ satisfy Conditions (i)--(v). The first three of these conditions are immediately seen to hold, while the fourth is satisfied by virtue of \cite[Prop. 6]{Crooks}. To verify Condition (v), let $\mathcal{O}\subseteq\mathfrak{g}$ be an adjoint orbit. Since $\mu(X)=\mathfrak{g}_{\text{reg}}$, we must have $\mu^{-1}(\mathcal{O})=\emptyset$ whenever $\mathcal{O}$ is not regular. If $\mathcal{O}$ is regular, then the isomorphism \eqref{Equation: Isomorphism Slodowy slice quotient} implies that $-\mathcal{O}:=\{-x:x\in\mathcal{O}\}$ meets $S_{\text{reg}}$ is a unique point $y$, and one can use \eqref{Equation: Formula for mu_reg} to check that $$\mu^{-1}(\mathcal{O})=G\times\{y\}\subseteq G\times S_{\text{reg}}.$$ We thus see that $\mu^{-1}(\mathcal{O})$ is irreducible for all adjoint orbits $\mathcal{O}\subseteq\mathfrak{g}$.   
\end{remark}

\begin{remark}\label{Remark: First remark}
We require $X$ to be affine in order to use \cite[Section III, Cor. 1]{Luna} in the proof of Theorem \ref{Theorem: Main theorem}. In more detail, this referenced result considers a reductive linear algebraic group $H$ acting algebraically on an affine variety $Y$ and is formulated as follows: the canonical morphism $Y\rightarrow Y/H$ is a principal $H$-bundle\footnote{We shall always use the algebro-geometric notion of a principal bundle (see \cite[Section I]{Luna}), which is defined to be \'{e}tale-locally trivial.} if and only if $H$ acts freely on $Y$. For a more situation-specific version of this result, assume that $Y$ is also irreducible. Additionally, let $Z$ be a normal variety and $f:Y\rightarrow Z$ a surjective morphism with the property that each fibre is a single $H$-orbit. One can then show that there exists a variety isomorphism $Y/H\xrightarrow{\cong} Z$ making the diagram
$$\begin{tikzcd}[column sep=small]
	& Y \arrow[dl] \arrow[dr, "f"] & \\
	Y/H \arrow{rr}{\cong} & & Z
\end{tikzcd}$$
commute (see \cite[Cor. 25.3.4 and Prop. 25.3.5]{Tauvel}), so that \cite[Section III, Cor. 1]{Luna} takes the following form: $f$ is a principal $H$-bundle if and only if $H$ acts freely on $Y$. We will later apply this rephrased version of \cite[Section III, Cor. 1]{Luna} to argue that a particular map $X\rightarrow S_{\text{reg}}$ is a principal $G$-bundle, for which we must assume that $X$ (like $Y$ above) is affine.     
\end{remark}

\begin{remark}\label{Remark: Third remark}
Theorem \ref{Theorem: Main theorem} does not hold if one relaxes Condition (ii) to require only that the $G$-action be locally free. To see this, let $Z(G)$ denote the centre of $G$. The action \eqref{Equation: G-action} of $G$ on $G\times S_{\text{reg}}$ restricts to a $Z(G)$-action, which in turn commutes with the original $G$-action. In other words, $G\times S_{\text{reg}}$ carries a Hamiltonian action of $G\times Z(G)$. Now note that $Z(G)$ is a finite group, a consequence of having taken $G$ to be semisimple. It follows that $(G\times S_{\text{reg}})/Z(G)$ is the holomorphic symplectic quotient of $G\times S_{\text{reg}}$ by $Z(G)$ (see \cite[Section 7.5]{Kobayashi} for details on holomorphic symplectic quotients). This quotient carries a residual Hamiltonian $G$-action whose moment map is obtained by letting $\mu_{\text{reg}}$ descend to the quotient $(G\times S_{\text{reg}})/Z(G)$. An examination of \eqref{Equation: G-action} reveals that this quotient is $G$-equivariantly isomorphic to $(G/Z(G))\times S_{\text{reg}}$, with $G$-acting on the first factor. The moment map on $(G/Z(G))\times S_{\text{reg}}$ is given by
$$\mu:(G/Z(G))\times S_{\text{reg}}\rightarrow\mathfrak{g},\quad ([g],x)\mapsto -\Adj_{g^{-1}}(x).$$
One can now check that $X=(G/Z(G))\times S_{\text{reg}}$, its Hamiltonian $G$-action, and the moment map $\mu$ satisfy Conditions (i), (iii), (iv), and (v), with the verification of (v) being almost identical to that given in Remark \ref{Remark: New remark}. However, note that $Z(G)$ is the $G$-stabilizer of each point in $X$. It follows that the $G$-action on $X$ is locally free but need not be free. Since $G$ acts freely on $G\times S_{\text{reg}}$, this means that $X$ need not be $G$-equivariantly isomorphic to $G\times S_{\text{reg}}$.     
\end{remark}

\begin{remark}\label{Remark: Second remark}
Let $\mathcal{O}\subseteq\mathfrak{g}$ be an adjoint orbit. Condition (ii) implies that $\mu$ is a submersion (see \cite[Prop. III.2.3]{Audin}), so that $\mu^{-1}(\mathcal{O})$ is a smooth subvariety of $X$. In particular, Condition (v) holds if and only if $\mu^{-1}(\mathcal{O})$ is connected in the Zariski topology. This is in turn equivalent to the connectedness of $\mu^{-1}(\mathcal{O})$ in the complex analytic topology (see \cite[Thm. 6.1]{Osserman}), which by virtue of $\mu$ being a submersion would hold if the fibres of $\mu$ were connected (also in the complex analytic topology). Hence, in the presence of Condition (ii), Condition (v) is weaker than $\mu$ being fibre-connected. 
	
Condition (v) turns out to be strictly weaker than fibre-connectedness, even when one considers only those $X$ and $\mu$ satisfying (i)--(iv). Indeed, recall that (i)--(v) hold for the example considered in Remark \ref{Remark: New remark}. For the same example, it turns out that $\mu$ is not fibre-connected (see \cite[Section 3.2]{Crooks}).
\end{remark}

\begin{remark}\label{Remark: Fourth remark}
Theorem \ref{Theorem: Main theorem} assumes that $\mu(X)=\mathfrak{g}_{\text{reg}}$ rather than the weaker condition $\mu(X)\subseteq\mathfrak{g}_{\text{reg}}$ discussed earlier. Indeed, the theorem no longer holds when one replaces the stronger condition with the weaker one. To see this, let $U$ be any affine open subvariety of $S_{\text{reg}}$ not isomorphic to $S_{\text{reg}}$ itself and set $X:=G\times U$. Note that $X$ is an open subvariety of $G\times S_{\text{reg}}$, so that the former inherits a holomorphic symplectic variety structure from the latter. Note also that $X$ is invariant under the $G$-action \eqref{Equation: G-action}, which together with the previous sentence implies that \eqref{Equation: G-action} defines a Hamiltonian action on $X$. The moment map is $\mu_{\text{reg}}\vert_{X}$. 
	
It is not difficult to check that $X$, its Hamiltonian $G$-action, and the moment map $\mu=\mu_{\text{reg}}\vert_{X}$ satisfy Conditions (i)--(iii) in Theorem \ref{Theorem: Main theorem}, and one can adapt the relevant part of Remark \ref{Remark: New remark} to show that Condition (v) is also satisfied. Condition (iv) does not hold, however, as one can use \eqref{Equation: Isomorphism Slodowy slice quotient}, \eqref{Equation: Formula for mu_reg}, and the fact that $U$ is a proper subvariety of $S_{\text{reg}}$ to show that $\mu(X)$ is a proper subset of $\mathfrak{g}_{\text{reg}}$. The varieties $X$ and $G\times S_{\text{reg}}$ are also not $G$-equivariantly isomorphic, since $U$ being non-isomorphic to $S_{\text{reg}}$ precludes the quotients $X/G$ ($\cong U$) and $(G\times S_{\text{reg}})/G$ ($\cong S_{\text{reg}}$) from being isomorphic.               
\end{remark}

\section{Proof of the main result}\label{Section: Proof of the main result}
Assume that the hypotheses of Theorem \ref{Theorem: Main theorem} are satisfied and define $\overline{\mu}:X\rightarrow S_{\text{reg}}$ to be the following composite map:
\begin{equation}\label{Equation: Quotient moment map}\overline{\mu}:=\left(X\xrightarrow{\mu}\mathfrak{g}_{\text{reg}}\xrightarrow{\pi}\mathfrak{g}_{\text{reg}}/G\xrightarrow{\varphi^{-1}}S_{\text{reg}}\right),\end{equation}
where $\pi$ is the quotient map and $\varphi$ is the isomorphism defined in \eqref{Equation: Isomorphism Slodowy slice quotient}. More concretely,  $\overline{\mu}$ assigns to each $x\in X$ the unique point at which $S_{\text{reg}}$ intersects $\mathcal{O}(\mu(x))$. It follows that 
\begin{equation}\label{Equation: Fibre formula}\overline{\mu}^{-1}(y)=\mu^{-1}(\mathcal{O}(y))\end{equation} for all $y\in S_{\text{reg}}$.

Now fix a point $y\in S_{\text{reg}}$. The fibre $\overline{\mu}^{-1}(y)$ is then nonempty, as Condition (iv) implies that $\overline{\mu}$ is surjective. Accordingly, we may choose a point $x\in\overline{\mu}^{-1}(y)$. At the same time, we can use \eqref{Equation: Fibre formula} and $\mu$'s $G$-equivariance property to conclude that $\overline{\mu}^{-1}(y)$ is a $G$-invariant subvariety of $X$. It follows that the $G$-orbit in $X$ through $x$, denoted $G\cdot x$, belongs to $\overline{\mu}^{-1}(y)$. 

We will establish that $G\cdot x=\overline{\mu}^{-1}(y)$. To this end, note that \eqref{Equation: Fibre formula} and Condition (v) show $\overline{\mu}^{-1}(y)$ to be irreducible. Proving $G\cdot x=\overline{\mu}^{-1}(y)$ therefore reduces to showing that $G\cdot x$ is closed and has dimension equal to that of $\overline{\mu}^{-1}(y)$. Accordingly, note that the closure of $G\cdot x$ is a union of the orbit itself and a (possibly empty) collection of strictly lower-dimensional $G$-orbits (see \cite[Prop. 21.4.5]{Tauvel}), while Condition (ii) implies that all $G$-orbits are $\dim(G)$-dimensional. These observations imply that $G\cdot x$ is closed and $\dim(G)$-dimensional. At the same time, Condition (ii) allows us to conclude that $\mu$ is a submersion (see \cite[Prop. III.2.3]{Audin}), giving rise to the following calculation:
\begin{align*}\dim(\overline{\mu}^{-1}(y))=\dim(\mu^{-1}(\mathcal{O}(y))) & = \dim(X)-\dim(g_{\text{reg}})+\dim(\mathcal{O}(y)) \\ & =(\dim(G)+\mathrm{rk}(G))-\dim(G)+(\dim(G)-\mathrm{rk}(G))\\
& = \dim(G),
\end{align*}
where we have used the fact that $\dim(\mathcal{O}(y))=\dim(G)-\mathrm{rk}(G)$, a consequence of $\mathcal{O}(y)$ being regular. 
Hence $G\cdot x=\overline{\mu}^{-1}(y)$, as desired. 

We have shown that each fibre of $\overline{\mu}$ is a single $G$-orbit, one of the hypotheses required in order to apply the version of \cite[Section III, Cor. 1]{Luna} discussed at the end of Remark \ref{Remark: First remark}. As for the other hypotheses, we know $G$ to be reductive, $X$ to be affine, $S_{\text{reg}}$ to be normal, and $G$ to act freely on $X$. Only one hypothesis remains to be checked, namely that $X$ is irreducible. To this end, the following lemma will be useful. 

\begin{lemma}\label{Lemma: Helpful}
	The map $\overline{\mu}$ is a submersion.
\end{lemma}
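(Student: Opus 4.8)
The plan is to show that $\overline{\mu}$ is a submersion by factoring it through $\mu$ and using the structure of $\overline{\mu}$ as a composite. Recall from \eqref{Equation: Quotient moment map} that $\overline{\mu}=\varphi^{-1}\circ\pi\circ\mu$, where $\mu:X\rightarrow\mathfrak{g}_{\text{reg}}$ is already known to be a submersion (a consequence of Condition (ii) via \cite[Prop. III.2.3]{Audin}), and $\varphi^{-1}$ is an isomorphism of varieties and hence a submersion. So the matter reduces entirely to understanding the differential of the quotient map $\pi:\mathfrak{g}_{\text{reg}}\rightarrow\mathfrak{g}_{\text{reg}}/G$, or equivalently, the composite $\varphi^{-1}\circ\pi:\mathfrak{g}_{\text{reg}}\rightarrow S_{\text{reg}}$.

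First I would identify $\varphi^{-1}\circ\pi$ concretely: it is the map $\mathfrak{g}_{\text{reg}}\rightarrow S_{\text{reg}}$ sending each regular element $z$ to the unique point at which $S_{\text{reg}}$ meets the adjoint orbit $\mathcal{O}(z)$. Call this map $\sigma$. Since each regular orbit meets $S_{\text{reg}}$ transversally (this is the key geometric content of Kostant's slice theorem, \cite[Thm. 8]{KostantLie}), the restriction $\sigma\vert_{S_{\text{reg}}}$ is the identity, and more importantly $\sigma$ is a retraction onto $S_{\text{reg}}$ whose fibres are exactly the regular adjoint orbits. The differential $d\sigma_z:T_z\mathfrak{g}_{\text{reg}}\rightarrow T_{\sigma(z)}S_{\text{reg}}$ is surjective precisely because $\sigma$ restricts to the identity on $S_{\text{reg}}$: for any tangent vector $w\in T_{\sigma(z)}S_{\text{reg}}$, viewing $w$ as a tangent vector in $T_{\sigma(z)}\mathfrak{g}_{\text{reg}}$ and using that $\sigma\circ\iota=\mathrm{id}_{S_{\text{reg}}}$ where $\iota:S_{\text{reg}}\hookrightarrow\mathfrak{g}_{\text{reg}}$ is the inclusion, one obtains a preimage. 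Thus $\sigma=\varphi^{-1}\circ\pi$ is a submersion.

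The cleanest way to organize this is to observe that a composite of submersions is a submersion, so once $\sigma$ is established as a submersion and $\mu$ is known to be one, $\overline{\mu}=\sigma\circ\mu$ is immediately a submersion. I expect the main obstacle to be rigorously justifying that $\sigma$ (the orbit-to-slice-intersection map) is a submersion, since this is the only step relying on genuine geometric input rather than formal composition. The essential point is that the slice $S_{\text{reg}}$ meets each regular orbit transversally in a single reduced point, so that the tangent space decomposition $T_z\mathfrak{g}=T_z\mathcal{O}(z)\oplus\ker(\adj_\eta)$ holds along $S_{\text{reg}}$; this transversality is exactly what makes the Slodowy slice a genuine slice and is implicit in the isomorphism \eqref{Equation: Isomorphism Slodowy slice quotient}. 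Once transversality is invoked, surjectivity of $d\sigma$ follows from the retraction identity $\sigma\circ\iota=\mathrm{id}$, and the proof concludes.
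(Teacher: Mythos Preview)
Your approach is essentially the same as the paper's: factor $\overline{\mu}=\sigma\circ\mu$ with $\sigma:=\varphi^{-1}\circ\pi$, observe that $\mu$ is a submersion, and then argue that $\sigma$ is a submersion by exhibiting a section. There is, however, a genuine gap in the section argument as you have written it.

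The retraction identity $\sigma\circ\iota=\mathrm{id}_{S_{\text{reg}}}$ only yields surjectivity of $d\sigma$ at points of $S_{\text{reg}}$. Concretely, differentiating gives $d\sigma_{\iota(y)}\circ d\iota_y=\mathrm{id}$ for $y\in S_{\text{reg}}$, so $d\sigma$ is surjective at $\iota(y)=y$. But for a general $z\in\mathfrak{g}_{\text{reg}}$, the ``preimage'' you produce lives in $T_{\sigma(z)}\mathfrak{g}_{\text{reg}}$, not in $T_z\mathfrak{g}_{\text{reg}}$; it is therefore not a preimage under $d\sigma_z$ unless $z=\sigma(z)$, i.e.\ unless $z\in S_{\text{reg}}$. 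The transversality you invoke is likewise stated only along $S_{\text{reg}}$ and does not by itself transport surjectivity to other points.

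The paper closes this gap by building, for each $x\in\mathfrak{g}_{\text{reg}}$, a section of $\sigma$ that passes through $x$: writing $x=\Adj_g(y)$ with $y\in S_{\text{reg}}$, the map $\psi:S_{\text{reg}}\to\mathfrak{g}_{\text{reg}}$, $z\mapsto\Adj_g(z)$, satisfies $\sigma\circ\psi=\mathrm{id}$ and $\psi(y)=x$, so $d\sigma_x\circ d\psi_y=\mathrm{id}$ gives surjectivity at $x$. An equivalent one-line fix to your argument is to note that $\sigma$ is $G$-invariant, so $d\sigma_z=d\sigma_{\Adj_g(z)}\circ d(\Adj_g)_z$; since $d(\Adj_g)_z$ is an isomorphism and every orbit meets $S_{\text{reg}}$, surjectivity at points of $S_{\text{reg}}$ propagates to all of $\mathfrak{g}_{\text{reg}}$.
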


\begin{proof}
	Recall the definition of $\overline{\mu}$ given in \eqref{Equation: Quotient moment map}. Having noted that $\mu$ is a submersion, it will suffice to prove that $\varphi^{-1}\circ\pi$ is a submersion. To this end, suppose that $x\in\mathfrak{g}_{\text{reg}}$. By virtue of the isomorphism \eqref{Equation: Isomorphism Slodowy slice quotient}, there exist elements $g\in G$ and $y\in S_{\text{reg}}$ for which $x=\Adj_g(y)$. Now observe that
	$$\psi:S_{\text{reg}}\rightarrow\mathfrak{g}_{\text{reg}},\quad z\mapsto \Adj_g(z)$$
	is a section of $\varphi^{-1}\circ\pi$ satisfying $\psi(y)=x$. It follows that $d_x(\varphi^{-1}\circ\pi)\circ d_y\psi$ must be the identity on $T_yS_{\text{reg}}$, where $d_x(\varphi^{-1}\circ\pi)$ and $d_y\psi$ are the differential of $\varphi^{-1}\circ\pi$ at $x$ and the differential of $\psi$ at $y$, respectively. This shows $d_x(\varphi^{-1}\circ\pi)$ to be surjective, and we conclude that $\varphi^{-1}\circ\pi$ is indeed a submersion.  
\end{proof}

\begin{remark}
An alternative and perhaps more conceptual proof can be roughly sketched as follows. There exist $\mathrm{rk}(G)$ algebraically independent homogeneous generators of $\mathbb{C}[\mathfrak{g}]^G$, the algebra of $\Adj$-invariant polynomials on $\mathfrak{g}$. One can assemble these polynomials into the components of a map $\mathfrak{g}\rightarrow\mathbb{C}^{\mathrm{rk}(G)}$, called the \textit{adjoint quotient}, which is known to be a submersion when restricted to $\mathfrak{g}_{\text{reg}}$ (cf. \cite[Thm. 9]{KostantLie}). This restricted adjoint quotient and $\varphi^{-1}\circ\pi$ are related by composition with an isomorphism $\mathbb{C}^{\text{rk}(G)}\cong S_{\text{reg}}$, owing to the fact that $S_{\text{reg}}$ is a section of the adjoint quotient. It follows that $\varphi^{-1}\circ\pi$ is also a submersion, which, as noted in the proof above, is sufficient to conclude that $\overline{\mu}$ is a submersion.  
\end{remark}

Let us return to the proof of Theorem \ref{Theorem: Main theorem}. We note that the fibres of $\overline{\mu}$ are connected in the complex analytic topology, as each fibre is a $G$-orbit. Together with Lemma \ref{Lemma: Helpful}, this implies that $X$ is itself connected in the complex analytic topology. In particular, $X$ is Zariski-connected. Since $X$ is smooth, this amounts to $X$ being irreducible. 

By the discussion from the paragraph preceding Lemma \ref{Lemma: Helpful}, we may apply \cite[Section III, Cor. 1]{Luna} and conclude that $\overline{\mu}:X\rightarrow S_{\text{reg}}$ is a principal $G$-bundle. This bundle is trivial since the base $S_{\text{reg}}$ is affine space (see \cite[Thm. C]{Raghunathan} or \cite[Prop. 3.9]{Wendt}). In particular, there exists a $G$-equivariant variety isomorphism $X\cong G\times S_{\text{reg}}$.       

\subsection*{Acknowledgements} The author is grateful to Roger Bielawski for asking the questions that ultimately prompted this work, and to Steven Rayan for several fruitful discussions. The Institute of Differential Geometry at Leibniz Universit\"{a}t Hannover supported this work through the author's postdoctoral fellowship.

	\bibliographystyle{acm} 
	\bibliography{BriefArxiv}
\end{document}